\theoremstyle{plain}
\newtheorem{thm}{Theorem}
\newtheorem{cor}{Corollary}
\theoremstyle{remark}
\newtheorem{rem}{Remark}
\DeclareMathOperator{\td}{d\mspace{-2mu}}
\date{Drafted on 24 November 2008 and completed on 26 November 2008 in VU's Student Village}
\date{}
\begin{document}

\title{Sharp inequalities for polygamma functions}

\author[F. Qi]{Feng Qi}
\address[F. Qi]{Research Institute of Mathematical Inequality Theory, Henan Polytechnic University, Jiaozuo City, Henan Province, 454010, China}
\email{\href{mailto: F. Qi <qifeng618@gmail.com>}{qifeng618@gmail.com}, \href{mailto: F. Qi <qifeng618@hotmail.com>}{qifeng618@hotmail.com}, \href{mailto: F. Qi <qifeng618@qq.com>}{qifeng618@qq.com}}
\urladdr{\url{http://qifeng618.spaces.live.com}}

\author[B.-N. Guo]{Bai-Ni Guo}
\address[B.-N. Guo]{School of Mathematics and Informatics, Henan Polytechnic University, Jiaozuo City, Henan Province, 454010, China}
\email{\href{mailto: B.-N. Guo <bai.ni.guo@gmail.com>}{bai.ni.guo@gmail.com}, \href{mailto: B.-N. Guo <bai.ni.guo@hotmail.com>}{bai.ni.guo@hotmail.com}}
\urladdr{\url{http://guobaini.spaces.live.com}}

\begin{abstract}
The main aim of this paper is to prove that the double inequality
\begin{equation*}
\frac{(k-1)!}{\Bigl\{x+\Bigl[\frac{(k-1)!}{\vert\psi^{(k)}(1)\vert}\Bigr]^{1/k}\Bigr\}^k} +\frac{k!}{x^{k+1}}<\bigl\vert\psi^{(k)}(x)\bigr\vert<\frac{(k-1)!}{\bigl(x+\frac12\bigr)^k}+\frac{k!}{x^{k+1}}
\end{equation*}
holds for $x>0$ and $k\in\mathbb{N}$ and that the constants $\Bigl[\frac{(k-1)!}{\vert\psi^{(k)}(1)\vert}\Bigr]^{1/k}$ and $\frac12$ are the best possible. In passing, some related inequalities and (logarithmically) complete monotonicity results concerning the gamma, psi and polygamma functions are surveyed.
\end{abstract}

\keywords{Inequality; polygamma function; psi function; completely monotonic function; logarithmically completely monotonic function}

\subjclass[2000]{26A48; 26D07; 26D15; 33B15}

\thanks{This paper was typeset using \AmS-\LaTeX}

\maketitle

\section{Introduction}
\subsection{Completely monotonic functions}
Recall~\cite[Chapter~XIII]{mpf-93} and~\cite[Chapter~IV]{widder} that a function $f(x)$ is said to be completely monotonic on an interval $I\subseteq\mathbb{R}$ if $f(x)$ has derivatives of all orders on $I$ and
\begin{equation}
0\le(-1)^{k}f^{(k)}(x)<\infty
\end{equation}
holds for all $k\geq0$ on $I$.
\par
The celebrated Bernstein-Widder Theorem~\cite[p.~161]{widder} states that a function $f(x)$ is completely monotonic on $(0,\infty)$ if and only if
\begin{equation}\label{converge}
f(x)=\int_0^\infty e^{-xs}\td\mu(s),
\end{equation}
where $\mu$ is a nonnegative measure on $[0,\infty)$ such that the integral~\eqref{converge} converges for all $x>0$. This means that a function $f(x)$ is completely monotonic on $(0,\infty)$ if and only if it is a Laplace transform of the measure $\mu$.
\par
The completely monotonic functions have applications in different branches of mathematical sciences. For example, they play some role in combinatorics, numerical and asymptotic analysis, physics, potential theory, and probability theory.
\par
The most important properties of completely monotonic functions can be found in~\cite[Chapter~XIII]{mpf-93}, \cite[Chapter~IV]{widder} and closely-related references therein.

\subsection{Logarithmically completely monotonic functions}
Recall also~\cite{Atanassov, minus-one} that a function $f$ is said to be logarithmically completely monotonic on an interval $I\subseteq\mathbb{R}$ if it has derivatives of all orders on $I$ and its logarithm $\ln f$ satisfies
\begin{equation}\label{lcm-dfn}
0\le(-1)^k[\ln f(x)]^{(k)}<\infty
\end{equation}
for $k\in\mathbb{N}$ on $I$.
\par
By looking through ``logarithmically completely monotonic function'' in the database \href{http://www.ams.org/mathscinet/}{MathSciNet}, it is found that this phrase was first used in~\cite{Atanassov}, but with no a word to explicitly define it. Thereafter, it seems to have been ignored by the mathematical community. In early 2004, this terminology was recovered in~\cite{minus-one} and it was immediately referenced in~\cite{auscm-rgmia}, the preprint of the paper~\cite{e-gam-rat-comp-mon}. A natural question that one may ask is: Whether is this notion trivial or not? In~\cite[Theorem~4]{minus-one}, it was proved that all logarithmically completely monotonic functions are also completely monotonic, but not conversely. This result was formally published when revising~\cite{compmon2}. Hereafter, this conclusion and its proofs were dug in~\cite{CBerg, Gao-0709.1126v2-Arxiv, grin-ismail} and~\cite{schur-complete} (the preprint of~\cite{absolute-mon.tex}) once and again. Furthermore,  in the paper~\cite{CBerg}, the logarithmically completely monotonic functions on $(0,\infty)$ were characterized as the infinitely divisible completely monotonic functions studied in~\cite{horn} and all Stieltjes transforms were proved to be logarithmically completely monotonic on $(0,\infty)$. For more information, please refer to~\cite{CBerg}.

\subsection{The gamma and polygamma functions}
It is well-known that the classical Euler gamma function $\Gamma(x)$ may be defined for $x>0$ by
\begin{equation}\label{egamma}
\Gamma(x)=\int^\infty_0t^{x-1} e^{-t}\td t.
\end{equation}
The logarithmic derivative of $\Gamma(x)$, denoted by $\psi(x)=\frac{\Gamma'(x)}{\Gamma(x)}$, is called the psi or digamma function, and $\psi^{(k)}(x)$ for $k\in\mathbb{N}$ are called the polygamma functions. It is common knowledge that these functions are fundamental and important and that they have much extensive applications in mathematical sciences.

\subsection{The first kind of inequalities for the psi and polygamma functions}
In~\cite[Theorem~2.1]{Ismail-Muldoon-119}, \cite[Lemma~1.3]{sandor-gamma-2-ITSF.tex} and~\cite[Lemma~3]{sandor-gamma-2-ITSF.tex-rgmia}, the function $\psi(x)-\ln x+\frac{\alpha}x$ was proved to be completely monotonic on $(0,\infty)$ if and only if $\alpha\ge1$, so is its negative if and only if $\alpha\le\frac12$. In~\cite[Theorem~2]{chen-qi-log-jmaa} and~\cite[Theorem~2.1]{Muldoon-78}, the function $\frac{e^x\Gamma(x)} {x^{x-\alpha}}$ was proved to be logarithmically completely monotonic on $(0,\infty)$ if and only if $\alpha\ge1$, so is its reciprocal if and only if $\alpha\le\frac12$. From these, the following double inequalities were derived and employed in~\cite{subadditive-qi.tex, Comp-Mon-Digamma-Trigamma-Divided.tex, simple-equiv-simple-rev.tex, Extension-TJM-2003.tex, theta-new-proof.tex, polygamma-square-polygamma.tex, Open-TJM-2003.tex, AAM-Qi-09-PolyGamma.tex, property-psi.tex, subadditive-qi-guo.tex, subadditive-qi-3.tex}: For $x\in(0,\infty)$ and $k\in\mathbb{N}$, we have
\begin{equation}\label{qi-psi-ineq-1}
\ln x-\frac1x<\psi(x)<\ln x-\frac1{2x}
\end{equation}
and
\begin{equation}\label{qi-psi-ineq}
\frac{(k-1)!}{x^k}+\frac{k!}{2x^{k+1}} <\bigl\vert\psi^{(k)}(x)\bigr\vert<\frac{(k-1)!}{x^k}+\frac{k!}{x^{k+1}}.
\end{equation}
\par
In~\cite[Theorem~9]{psi-alzer}, it was proved that if $k\ge1$ and $n\ge0$ are integers then
\begin{equation}\label{pai-alzer-thm9-ineq}
S_k(2n;x)<\bigl\vert\psi^{(k)}(x)\bigr\vert<S_k(2n+1;x)
\end{equation}
holds for $x>0$, where
\begin{equation}
S_k(p;x)=\frac{(k-1)!}{x^k}+\frac{k!}{2x^{k+1}} +\sum_{i=1}^pB_{2i}\Biggl[\prod_{j=1}^{k-1}(2i+j)\Biggr]\frac1{x^{2i+k}}
\end{equation}
with the usual convention that an empty sum is nil and $B_i$ for $i\ge0$ are Bernoulli numbers defined by
\begin{equation}
\frac{t}{e^t-1}=\sum_{i=0}^\infty B_i\frac{t^i}{i!} =1-\frac{x}2+\sum_{j=1}^\infty B_{2j}\frac{x^{2j}}{(2j)!}, \quad\vert x\vert<2\pi.
\end{equation}
\par
In~\cite{Allasia-Gior-Pecaric-MIA-02}, among other things, the following double inequalities were procured: For $x>\frac12$, we have
\begin{equation}\label{Allasia-Gior-Pecaric-ineq-1}
\sum_{k=1}^{2N+1}\frac{B_{2k}\bigl(\frac12\bigr)}{2k\bigl(x-\frac12\bigr)^{2k}} <\ln\biggl(x-\frac12\biggr)-\psi(x) <\sum_{k=1}^{2N}\frac{B_{2k}\bigl(\frac12\bigr)}{2k\bigl(x-\frac12\bigr)^{2k}}
\end{equation}
and
\begin{multline}\label{Allasia-Gior-Pecaric-ineq-2}
\frac{(n-1)!}{\bigl(x-\frac12\bigr)^n} +\sum_{k=1}^{2N+1}\frac{(n+2k-1)!B_{2k}\bigl(\frac12\bigr)}{(2k)!\bigl(x-\frac12\bigr)^{n+2k}} <\bigl\vert\psi^{(n)}(x)\bigr\vert\\
<\frac{(n-1)!}{\bigl(x-\frac12\bigr)^n} +\sum_{k=1}^{2N}\frac{(n+2k-1)!B_{2k}\bigl(\frac12\bigr)}{(2k)!\bigl(x-\frac12\bigr)^{n+2k}},
\end{multline}
where $n\ge1$, $N\ge0$, an empty sum is understood to be nil, and
\begin{equation}
B_k\biggl(\frac12\biggr)=\biggl(\frac1{2^{k-1}}-1\biggr)B_k,\quad k\ge0.
\end{equation}
When replacing $2N$ by $2N-1$, inequalities~\eqref{Allasia-Gior-Pecaric-ineq-1} and~\eqref{Allasia-Gior-Pecaric-ineq-2} are reversed. In particular, for $n=1$ and $N=0$,
\begin{equation}\label{Allasia-Gior-Pecaric-ineq-n=1N=0}
\frac1{x-\frac12}-\frac1{12\bigl(x-\frac12\bigr)^2}<\psi'(x)<\frac1{x-\frac12},\quad x>\frac12.
\end{equation}
\par
It is obvious that if taking $x\to\bigl(\frac12\bigr)^+$ the lower and upper bounds in~\eqref{Allasia-Gior-Pecaric-ineq-2} tend to $-\infty$ and $\infty$ respectively, but the middle term tends to a limited constant. This implies that inequalities in~\eqref{Allasia-Gior-Pecaric-ineq-1} and~\eqref{Allasia-Gior-Pecaric-ineq-2}, including~\eqref{Allasia-Gior-Pecaric-ineq-n=1N=0}, may be not ideal.
\par
It is noted that the inequality~\eqref{pai-alzer-thm9-ineq} was deduced from~\cite[Theorem~8]{psi-alzer} which states that the functions
\begin{equation}\label{fn(x)}
F_n(x)=\ln\Gamma(x)-\biggl(x-\frac12\biggr)\ln x+x-\frac12\ln(2\pi) -\frac12\sum_{j=1}^{2n}\frac{B_{2j}}{2j(2j-1)x^{2j-1}}
\end{equation}
and
\begin{equation}
G_n(x)=-\ln\Gamma(x)+\biggl(x-\frac12\biggr)\ln x-x+\frac12\ln(2\pi) +\frac12\sum_{j=1}^{2n+1}\frac{B_{2j}}{2j(2j-1)x^{2j-1}}\label{gn(x)}
\end{equation}
are completely monotonic on $(0,\infty)$.
\par
In~\cite[Theorem~1]{merkle-jmaa-96}, the convexity of the functions $F_n(x)$ and $G_n(x)$ were presented alternatively.
\par
Stimulated by~\cite{poly-comp-mon.tex}, the complete monotonicity of $F_n(x)$ and $G_n(x)$ were simply verified in~\cite[Theorem~2]{Koumandos-jmaa-06} again.

\subsection{The second kind of inequalities for the psi and polygamma functions}
In~\cite[Theorem~1]{Guo-Qi-Srivastava2007.tex}, the function
\begin{equation}\label{g-{alpha,beta}(x)}
g_{\alpha,\beta}(x)=\biggl[\frac{e^x\Gamma(x+1)} {(x+\beta)^{x+\beta}}\biggr]^\alpha
\end{equation}
for real numbers $\alpha\ne 0$ and $\beta$ was shown to be logarithmically completely monotonic with respect to $x\in(\max\{0,-\beta\},\infty)$ if and only if either $\alpha>0$ and $\beta\geq1$ or $\alpha<0$ and $\beta\leq\frac12$. As a result, the following double inequalities~\eqref{qi-psi-ineq-beta-1} and~\eqref{qi-psi-ineq-beta-2} were deduced in~\cite[Lemma~2]{Open-TJM-2003.tex} and used in~\cite[Lemma~3]{Open-TJM-2003.tex}: For $x\in(0,\infty)$ and $k\in\mathbb{N}$, we have
\begin{equation}\label{qi-psi-ineq-beta-1}
\ln\biggr(x+\frac12\biggl)-\frac1x<\psi(x)<\ln(x+1)-\frac1x
\end{equation}
and
\begin{equation}\label{qi-psi-ineq-beta-2}
\frac{(k-1)!}{(x+1)^k}+\frac{k!}{x^{k+1}}<\bigl\vert\psi^{(k)}(x)\bigr\vert <\frac{(k-1)!}{\bigl(x+\frac12\bigr)^k}+\frac{k!}{x^{k+1}}.
\end{equation}
\par
It is clear that the left-hand side inequality in~\eqref{qi-psi-ineq-beta-1} and the right-hand side inequality in~\eqref{qi-psi-ineq-beta-2} are better than the left-hand side inequality in~\eqref{qi-psi-ineq-1} and the right-hand side inequality in~\eqref{qi-psi-ineq}. It is also easy to see that the right-hand side inequality in~\eqref{qi-psi-ineq-beta-1} and the left-hand side inequality in~\eqref{qi-psi-ineq-beta-2} are more exact than the right-hand side inequality in~\eqref{qi-psi-ineq-1} and the left-hand side inequality in~\eqref{qi-psi-ineq} when $x>0$ is close enough to $0$, but not when $x>0$ is large enough.
\par
For more information on further investigation of functions similar to~\eqref{g-{alpha,beta}(x)}, please refer to the research papers~\cite{Guo-Qi-12-07.tex, Guo-Qi-Srivastava2007-02.tex, Guo-Srivastava-AML-08}, the expository article~\cite{bounds-two-gammas.tex} and related references therein.

\subsection{A sharp inequality for the psi function and related results}
In~\cite[Lemma~1.7]{Batir-Arch-Math-08} and~\cite[Theorem~1]{Infinite-family-Digamma.tex}, it was proved that the double inequality
\begin{equation}\label{corollary2.3-rew}
    \ln\biggl(x+\frac12\biggr)-\frac1x< \psi(x)< \ln(x+e^{-\gamma})-\frac1x
\end{equation}
holds on $(0,\infty)$ and the scalars $\frac12$ and $e^{-\gamma}=0.56\dotsm$ in~\eqref{corollary2.3-rew} are the best possible.
\par
It is clear that the inequality~\eqref{corollary2.3-rew} refines and sharpens~\eqref{qi-psi-ineq-beta-1}. The inequality~\eqref{corollary2.3-rew} has relations with~\eqref{qi-psi-ineq-1} as~\eqref{qi-psi-ineq-beta-1} does.
\par
More strongly, the function
\begin{equation}\label{Q(x)-dfn-2}
Q(x)=e^{\psi(x+1)}-x
\end{equation}
was proved in~\cite[Theorem~2]{Infinite-family-Digamma.tex} to be strictly decreasing and convex on $(-1,\infty)$ with $\lim_{x\to\infty}Q(x)=\frac12$. The basic tools of the proofs in~\cite{Infinite-family-Digamma.tex} include
\begin{equation}\label{batir-alzer-ineq}
\psi'(x)e^{\psi(x)}<1,\quad x>0
\end{equation}
and
\begin{equation}\label{positivity}
[\psi'(x)]^2+\psi''(x)>0,\quad x>0.
\end{equation}
\par
Among other things, the monotonicity and convexity of the function~\eqref{Q(x)-dfn-2} were also derived in~\cite[Corollary~2 and Corollary~3]{egp}: For all $t>0$, the function $\exp\{\psi(x+t)\}-x$ is decreasing with respect to $x\in[0,\infty)$; For all $t>0$, the digamma function can be written in a way:
\begin{equation*}
\psi(x+t)=\ln(x+\delta(x)),\quad x>0,
\end{equation*}
where $\delta$ is decreasing convex function which maps $[0,\infty)$ onto $\bigl[e^{\psi(t)},t-\frac12\bigr)$.
\par
The one-sided inequality~\eqref{batir-alzer-ineq} was deduced in~\cite[Corollary~2]{egp} and recovered in~\cite[Lemma~1.1]{batir-new} and~\cite[Lemma~1.1]{batir-new-RGMIA}.
\par
The sharp double inequality~\eqref{corollary2.3-rew} is the special case $t=1$ of the following double inequality obtained in~\cite[Corollary~3]{egp}: For all $x>0$ and $t>0$, it holds that
\begin{equation}\label{egp-ineq-c3}
\ln\biggl(x+\frac{2t-1}2\biggr)<\psi(x+t)<\ln(x+\exp(\psi(t))).
\end{equation}
\par
It is worthwhile to remark that the left-hand side inequality in~\eqref{egp-ineq-c3} for $x+t\le\frac12$ is meaningless.
\par
Replacing $x$ by $x+t$ in~\eqref{corollary2.3-rew} yields
\begin{equation}\label{corollary2.3-rew-x+t}
    \ln\biggl(x+t+\frac12\biggr)-\frac1{x+t}< \psi(x+t)< \ln(x+t+e^{-\gamma})-\frac1{x+t}
\end{equation}
for all $x>0$ and $t>0$. The left-hand side inequality in~\eqref{corollary2.3-rew-x+t} extends and refines the corresponding one in~\eqref{egp-ineq-c3} and their right-hand side inequalities do not contain each other.
\par
For information about the history and backgrounds of the function~\eqref{Q(x)-dfn-2} and inequalities~\eqref{batir-alzer-ineq} and~\eqref{egp-ineq-c3}, please refer to the expository papers~\cite{bounds-two-gammas.tex, Wendel2Elezovic.tex} and lots of references therein.
\par
The inequality~\eqref{positivity} was first obtained in the proof of~\cite[p.~208, Theorem~4.8]{forum-alzer} and recovered in~\cite[Theorem~2.1]{batir-interest}, \cite[Lemma~1.1]{batir-new} and~\cite[Lemma~1.1]{batir-new-RGMIA}.
\par
In~\cite[Remark~1.3]{batir-jmaa-06-05-065}, it was pointed out that the inequality~\eqref{positivity} is the special case $n=1$ of~\cite[Lemma~1.2]{batir-jmaa-06-05-065} which reads
\begin{equation}
(-1)^n\psi^{(n+1)}(x)<\frac{n}{\sqrt[n]{(n-1)!}\,}\bigl[(-1)^{n-1}\psi^{(n)}(x)\bigr]^{1+1/n}
\end{equation}
for $x>0$ and $n\in\mathbb{N}$. This inequality can be restated more meaningfully as
\begin{equation}\label{gen-1-1}
\sqrt[n+1]{\frac{\bigl|\psi^{(n+1)}(x)\bigr|}{n!}}\, <\sqrt[n]{\frac{\bigl\vert\psi^{(n)}(x)\bigr\vert }{(n-1)!}}.
\end{equation}
\par
In~\cite[Lemma~4.6]{alzer-grinshpan}, the inequality~\eqref{positivity} was generalized to the $q$-analogue.
\par
In~\cite{notes-best-simple-open, notes-best-simple.tex-rgmia}, the preprints of~\cite{notes-best-simple-open.tex-rev, notes-best-simple-rev.tex}, the divided difference
\begin{equation}\label{Delta-dfn}
\Delta_{s,t}(x)=\begin{cases}\bigg[\dfrac{\psi(x+t) -\psi(x+s)}{t-s}\bigg]^2
+\dfrac{\psi'(x+t)-\psi'(x+s)}{t-s},&s\ne t\\
[\psi'(x+s)]^2+\psi''(x+s),&s=t
\end{cases}
\end{equation}
for $|t-s|<1$ and $-\Delta_{s,t}(x)$ for $|t-s|>1$ were proved to be completely monotonic with respect to $x\in(-\min\{s,t\},\infty)$. In particular, the function $[\psi'(x)]^2+\psi''(x)$ appearing in~~\eqref{positivity} is completely monotonic on $(0,\infty)$.
\par
For $m,n\in\mathbb{N}$, let
\begin{equation}
f_{m,n}(x)=\psi^{(n)}(x)+\bigr[\psi^{(m)}(x)\bigl]^2, \quad x>0.
\end{equation}
In~\cite{polygamma-square-polygamma.tex}, it was revealed that the functions $f_{1,2}(x)$ and $f_{m,2n-1}(x)$ are completely monotonic on $(0,\infty)$, but the functions $f_{m,2n}(x)$ for $(m,n)\ne(1,1)$ are not monotonic and does not keep the same sign on $(0,\infty)$. This means that $f_{1,2}(x)$ is the only nontrivial completely monotonic function on $(0,\infty)$ among all functions $f_{m,n}(x)$ for $m,n\in\mathbb{N}$.
\par
In~\cite{AAM-Qi-09-PolyGamma.tex}, the function
\begin{equation}\label{di-tetra-gamma-lambda}
\Delta_{\lambda}(x)=[\psi'(x)]^2+\lambda\psi''(x)
\end{equation}
was shown to be completely monotonic on $(0,\infty)$ if and only if $\lambda\le1$.
\par
For real numbers $s$, $t$, $\alpha=\min\{s,t\}$ and $\lambda$, define
\begin{equation}\label{Delta-lambda-dfn}
\Delta_{s,t;\lambda}(x)=\begin{cases}\bigg[\dfrac{\psi(x+t) -\psi(x+s)}{t-s}\bigg]^2
+\lambda\dfrac{\psi'(x+t)-\psi'(x+s)}{t-s},&s\ne t\\
[\psi'(x+s)]^2+\lambda\psi''(x+s),&s=t
\end{cases}
\end{equation}
with respect to $x\in(-\alpha,\infty)$. In~\cite{Comp-Mon-Digamma-Trigamma-Divided.tex}, the following complete monotonicity were established:
\begin{enumerate}
\item
For $0<|t-s|<1$,
\begin{enumerate}
\item
the function $\Delta_{s,t;\lambda}(x)$ is completely monotonic on $(-\alpha,\infty)$ if and only if $\lambda\le1$,
\item
so is the function $-\Delta_{s,t;\lambda}(x)$ if and only if $\lambda\ge\frac1{|t-s|}$;
\end{enumerate}
\item
For $|t-s|>1$,
\begin{enumerate}
\item
the function $\Delta_{s,t;\lambda}(x)$ is completely monotonic on $(-\alpha,\infty)$ if and only if $\lambda\le\frac1{|t-s|}$,
\item
so is the function $-\Delta_{s,t;\lambda}(x)$ if and only if $\lambda\ge1$;
\end{enumerate}
\item
For $s=t$, the function $\Delta_{s,s;\lambda}(x)$ is completely monotonic on $(-s,\infty)$ if and only if $\lambda\le1$;
\item
For $|t-s|=1$,
\begin{enumerate}
\item
the function $\Delta_{s,t;\lambda}(x)$ is completely monotonic if and only if $\lambda<1$,
\item
so is the function $-\Delta_{s,t;\lambda}(x)$ if and only if $\lambda>1$,
\item
and $\Delta_{s,t;1}(x)\equiv0$.
\end{enumerate}
\end{enumerate}
These results generalize the claim in the proof of~\cite{Kazarinoff-56}. For detailed information, see related texts remarked in the expository article~\cite{Wendel-Gautschi-type-ineq.tex}.
\par
In~\cite[Remark~2.3]{batir-jmaa-06-05-065}, it was pointed out that the inequality
\begin{equation}\label{psi(x+frac12}
\psi''(x)+\biggl[\psi'\biggl(x+\frac12\biggr)\biggr]^2<0
\end{equation}
for $x>0$ is a direct consequence of~\cite[Theorem~2.2]{batir-jmaa-06-05-065}: For $x>0$, $1\le k\le n-1$ and $n\in\mathbb{N}$, we have
\begin{multline}
(n-1)!\biggl[\frac{\psi^{(k)}\bigl(x+\frac12\bigr)}{(-1)^{k-1}(k-1)!}\biggr]^{n/k} <(-1)^{n+1}\psi^{(n)}(x)\\
<(n-1)!\biggl[\frac{\psi^{(k)}(x)}{(-1)^{k-1}(k-1)!}\biggr]^{n/k}
\end{multline}
which can be rewritten as
\begin{equation}\label{gen-2-1}
\sqrt[k]{\frac{\bigl|\psi^{(k)}\bigl(x+\frac12\bigr)\bigr|}{(k-1)!}}\, <\sqrt[n]{\frac{\bigl\vert\psi^{(n)}(x)\bigr\vert}{(n-1)!}}\, <\sqrt[k]{\frac{\bigl|\psi^{(k)}(x)\bigr|}{(k-1)!}}.
\end{equation}
\par
In~\cite{AAM-Qi-09-PolyGamma.tex}, the inequality~\eqref{psi(x+frac12} was generalized to the complete monotonicity: For real number $\alpha\in\mathbb{R}$ and $x>-\min\{0,\alpha\}$,
\begin{enumerate}
\item
the function $\psi''(x)+[\psi'(x+\alpha)]^2$ is completely monotonic if and only if $\alpha\le0$;
\item
the function $-\bigl\{\psi''(x)+[\psi'(x+\alpha)]^2\bigr\}$ is completely monotonic if
\begin{equation}\label{alpha-g-1}
\alpha\ge\sup_{x\in(0,\infty)}\frac{x}{\phi^{-1} \left([2(x+1)^2-1]e^{2x}\right)},
\end{equation}
where $\phi^{-1}$ denotes the inverse function of $\phi(x)=x\coth x$ on $(0,\infty)$.
\end{enumerate}
\par
In passing, it is noted that the results demonstrated in~\cite{notes-best-simple-equiv.tex-RGMIA, notes-best-simple-equiv.tex, simple-equiv.tex, simple-equiv-simple-rev.tex} have very close relations with the above mentioned conclusions.

\subsection{Main results of this paper}
The main aim of this paper is to sharpen the double inequality~\eqref{qi-psi-ineq-beta-2} and to generalize the sharp inequality~\eqref{corollary2.3-rew} to the cases for polygamma functions.
\par
The main result of this paper may be stated as the following theorem.

\begin{thm}\label{sharp-ineq-polygamma-thm}
For $x>0$ and $k\in\mathbb{N}$, the double inequality
\begin{equation}\label{poly-alpha-beta-best-ineq}
\frac{(k-1)!}{\Bigl\{x+\Bigl[\frac{(k-1)!}{\vert\psi^{(k)}(1)\vert}\Bigr]^{1/k}\Bigr\}^k} +\frac{k!}{x^{k+1}}<\bigl\vert\psi^{(k)}(x)\bigr\vert<\frac{(k-1)!}{\bigl(x+\frac12\bigr)^k}+\frac{k!}{x^{k+1}}
\end{equation}
holds and the constants $\Bigl[\frac{(k-1)!}{\vert\psi^{(k)}(1)\vert}\Bigr]^{1/k}$ and $\frac12$ in~\eqref{poly-alpha-beta-best-ineq} are the best possible.
\end{thm}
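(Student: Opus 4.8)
The plan is to collapse the entire statement---both bounds together with their sharpness---onto the monotonicity and limiting behaviour of a single auxiliary function.

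First I would exploit the functional equation $\psi(x+1)=\psi(x)+1/x$. Differentiating it $k$ times and recalling that $(-1)^{k-1}\psi^{(k)}(x)=\bigl|\psi^{(k)}(x)\bigr|>0$ on $(0,\infty)$ yields the recurrence $\bigl|\psi^{(k)}(x)\bigr|=\bigl|\psi^{(k)}(x+1)\bigr|+k!/x^{k+1}$. Subtracting the common term $k!/x^{k+1}$ from each member of~\eqref{poly-alpha-beta-best-ineq} then reduces the assertion to
\[
\frac{(k-1)!}{(x+c_k)^k}<\bigl|\psi^{(k)}(x+1)\bigr|<\frac{(k-1)!}{\bigl(x+\frac12\bigr)^k},\qquad c_k=\Bigl[\tfrac{(k-1)!}{|\psi^{(k)}(1)|}\Bigr]^{1/k}.
\]
Since every quantity is positive, taking reciprocals and $k$th roots shows that, upon introducing
\[
\phi(x)=\Bigl[\tfrac{(k-1)!}{|\psi^{(k)}(x+1)|}\Bigr]^{1/k}-x,
\]
the left-hand inequality is equivalent to $\phi(x)<c_k$ and the right-hand one to $\phi(x)>\frac12$. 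As $\phi(0)=c_k$, it suffices to prove that $\phi$ is strictly decreasing on $[0,\infty)$ and that $\lim_{x\to\infty}\phi(x)=\frac12$.

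The decisive step is the monotonicity. Differentiating and using $\bigl(\bigl|\psi^{(k)}(x+1)\bigr|\bigr)'=-\bigl|\psi^{(k+1)}(x+1)\bigr|$, one finds that $\phi'(x)<0$ is equivalent to $(k-1)!\,\bigl|\psi^{(k+1)}(x+1)\bigr|^k<k^k\bigl|\psi^{(k)}(x+1)\bigr|^{k+1}$. This is precisely inequality~\eqref{gen-1-1} taken with $n=k$ at the point $x+1$, raised to the power $k(k+1)$ and simplified through $k!=k\,(k-1)!$. Thus the already available estimate~\eqref{gen-1-1} delivers the strict monotonicity at once. I expect this identification to be the main obstacle: the bookkeeping of signs in the differentiation, and the recognition that the resulting derivative condition is \emph{exactly} \eqref{gen-1-1} rather than merely implied by it.

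For the limit I would invoke the classical asymptotic expansion $\bigl|\psi^{(k)}(y)\bigr|\sim\frac{(k-1)!}{y^k}+\frac{k!}{2y^{k+1}}$ as $y\to\infty$, whence $\bigl[(k-1)!/|\psi^{(k)}(y)|\bigr]^{1/k}=y-\frac12+O(1/y)$; setting $y=x+1$ gives $\phi(x)\to\frac12$. Combining the two facts, strict monotonicity forces $\frac12<\phi(x)<c_k$ for every $x>0$, which is exactly~\eqref{poly-alpha-beta-best-ineq} (note that its right-hand inequality already appears in~\eqref{qi-psi-ineq-beta-2}, so only the sharpened lower bound and the sharpness are genuinely new). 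Finally, sharpness is read off the same function: because $\phi$ decreases continuously from $\phi(0)=c_k$ down to the limit $\frac12$, it takes values arbitrarily close to $c_k$ as $x\to0^+$ and arbitrarily close to $\frac12$ as $x\to\infty$. Hence no constant larger than $\frac12$ can replace $\frac12$ in the upper bound (it would fail for large $x$) and no constant smaller than $c_k$ can replace $c_k$ in the lower bound (it would fail near $0$), so both constants are best possible.
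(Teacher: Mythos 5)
Your proposal is correct and follows essentially the same route as the paper: your $\phi(x)$ coincides (via the recurrence $\bigl|\psi^{(k)}(x)\bigr|-k!/x^{k+1}=\bigl|\psi^{(k)}(x+1)\bigr|$) with the paper's auxiliary function $h_k(x)$, the monotonicity is obtained from the very same inequality~\eqref{gen-1-1}, and the endpoint values $c_k$ at $0$ and $\tfrac12$ at $\infty$ yield both the inequality and its sharpness exactly as in the paper. The only cosmetic difference is that the paper justifies $\lim_{x\to\infty}h_k(x)=\tfrac12$ by sandwiching with the two-sided Alzer bounds~\eqref{pai-alzer-thm9-ineq} rather than quoting the asymptotic expansion outright.
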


As direct consequences of Theorem~\ref{sharp-ineq-polygamma-thm}, the following corollaries may be derived.

\begin{cor}\label{sharp-ineq-polygamma-cor-1}
For $x>0$ and $k\in\mathbb{N}$, the double inequality
\begin{equation}\label{poly-alpha-beta-best-ineq-c-1}
\frac{(k-1)!}{\Bigl\{x+\Bigl[\frac{(k-1)!}{\vert\psi^{(k)}(1)\vert}\Bigr]^{1/k}\Bigr\}^k} <\bigl\vert\psi^{(k)}(x+1)\bigr\vert<\frac{(k-1)!}{\bigl(x+\frac12\bigr)^k}
\end{equation}
is valid and the scalars $\Bigl[\frac{(k-1)!}{\vert\psi^{(k)}(1)\vert}\Bigr]^{1/k}$ and $\frac12$ in~\eqref{poly-alpha-beta-best-ineq-c-1} are the best possible.
\end{cor}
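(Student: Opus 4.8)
The plan is to deduce Corollary~\ref{sharp-ineq-polygamma-cor-1} directly from Theorem~\ref{sharp-ineq-polygamma-thm} by transcribing it through the recurrence relation of the polygamma functions, so that all the analytic content is borrowed from the theorem and only a shift of argument is performed.

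First I would record the functional equation. Starting from $\psi(x+1)=\psi(x)+\frac1x$ and differentiating $k$ times, and noting that the $k$-th derivative of $\frac1x$ equals $(-1)^kk!\,x^{-(k+1)}$, one obtains $\psi^{(k)}(x+1)=\psi^{(k)}(x)+(-1)^kk!\,x^{-(k+1)}$. Since $\psi^{(k)}(x)$ keeps the constant sign $(-1)^{k-1}$ on $(0,\infty)$, we have $\bigl|\psi^{(k)}(y)\bigr|=(-1)^{k-1}\psi^{(k)}(y)$; multiplying the functional equation by $(-1)^{k-1}$ then yields the clean identity
\begin{equation*}
\bigl|\psi^{(k)}(x+1)\bigr|=\bigl|\psi^{(k)}(x)\bigr|-\frac{k!}{x^{k+1}},\qquad x>0.
\end{equation*}

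Next I would subtract the common term $\frac{k!}{x^{k+1}}$ from each of the three members of~\eqref{poly-alpha-beta-best-ineq}. The middle member $\bigl|\psi^{(k)}(x)\bigr|-\frac{k!}{x^{k+1}}$ collapses to $\bigl|\psi^{(k)}(x+1)\bigr|$ by the identity just established, while the two bounds simply lose their $\frac{k!}{x^{k+1}}$ summand; this produces~\eqref{poly-alpha-beta-best-ineq-c-1} verbatim. In effect~\eqref{poly-alpha-beta-best-ineq-c-1} and~\eqref{poly-alpha-beta-best-ineq} are equivalent, related by the single additive term $\frac{k!}{x^{k+1}}$, which involves neither of the constants $\bigl[\frac{(k-1)!}{|\psi^{(k)}(1)|}\bigr]^{1/k}$ nor $\frac12$.

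Finally, the sharpness assertion needs no fresh work. Because the two double inequalities differ only by a term independent of the constants, a given scalar makes~\eqref{poly-alpha-beta-best-ineq-c-1} hold for all $x>0$ if and only if it makes~\eqref{poly-alpha-beta-best-ineq} hold for all $x>0$; hence the optimal constants for the two problems coincide, and the best possibility of $\bigl[\frac{(k-1)!}{|\psi^{(k)}(1)|}\bigr]^{1/k}$ and $\frac12$ is inherited from Theorem~\ref{sharp-ineq-polygamma-thm}. For a self-contained check one may instead verify that the same two scalars arise as the limits of $\bigl[\frac{(k-1)!}{|\psi^{(k)}(x+1)|}\bigr]^{1/k}-x$ as $x\to0^+$ and $x\to\infty$, using the value $\bigl|\psi^{(k)}(1)\bigr|$ at the left endpoint and the asymptotic expansion $\bigl|\psi^{(k)}(y)\bigr|\sim\frac{(k-1)!}{y^k}+\frac{k!}{2y^{k+1}}$ at infinity. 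The only point requiring care is the sign bookkeeping in the functional equation; there is no genuine analytic obstacle here, as all the difficulty is already concentrated in Theorem~\ref{sharp-ineq-polygamma-thm}.
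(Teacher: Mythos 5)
Your proposal is correct and is essentially identical to the paper's own proof, which likewise derives the corollary by applying the recurrence $\bigl|\psi^{(k)}(x)\bigr|-\frac{k!}{x^{k+1}}=\bigl|\psi^{(k)}(x+1)\bigr|$ (an equivalent form of $\psi^{(k)}(x+1)=\psi^{(k)}(x)+(-1)^k k!\,x^{-(k+1)}$) to subtract the common term from all three members of~\eqref{poly-alpha-beta-best-ineq}. Your observation that sharpness transfers because the additive term involves neither constant is also implicit in the paper, whose proof of Theorem~\ref{sharp-ineq-polygamma-thm} already works with the function $h_k(x)=\bigl[\frac{(k-1)!}{|\psi^{(k)}(x+1)|}\bigr]^{1/k}-x$, i.e.\ exactly the quantity governing~\eqref{poly-alpha-beta-best-ineq-c-1}.
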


\begin{cor}\label{sharp-ineq-polygamma-cor-2}
Under the usual convention that an empty sum is understood to be nil, the double inequalities
\begin{multline}\label{poly-alpha-beta-best-ineq-c-3}
k!\sum_{i=1}^{m}\frac1{(x+i-1)^{k+1}}+ \frac{(k-1)!}{\Bigl\{x+m-1+\Bigl[\frac{(k-1)!}{\vert\psi^{(k)}(1)\vert}\Bigr]^{1/k}\Bigr\}^k}
<\bigl\vert\psi^{(k)}(x)\bigr\vert\\
<k!\sum_{i=1}^{m}\frac1{(x+i-1)^{k+1}} +\frac{(k-1)!}{\bigl(x+m-\frac12\bigr)^k}
\end{multline}
and
\begin{multline}\label{poly-alpha-beta-best-ineq-c-2}
\frac{(k-1)!}{\Bigl\{x+\Bigl[\frac{(k-1)!}{\vert\psi^{(k)}(1)\vert}\Bigr]^{1/k}\Bigr\}^k} -k!\sum_{i=1}^{m-1}\frac1{(x+i)^{k+1}}
<\bigl\vert\psi^{(k)}(x+m)\bigr\vert\\
<\frac{(k-1)!}{\bigl(x+\frac12\bigr)^k}-k!\sum_{i=1}^{m-1}\frac1{(x+i)^{k+1}}
\end{multline}
hold for $x>0$ and $k,m\in\mathbb{N}$. Meanwhile, the quantities $\Bigl[\frac{(k-1)!}{\vert\psi^{(k)}(1)\vert}\Bigr]^{1/k}$ and $\frac12$ in inequalities~\eqref{poly-alpha-beta-best-ineq-c-3} and~\eqref{poly-alpha-beta-best-ineq-c-2} are the best possible.
\end{cor}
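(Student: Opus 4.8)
The plan is to pass to the ``shifted'' function and reduce the entire statement to a single monotonicity fact that is already recorded in the introduction. First I would invoke the recurrence $\psi^{(k)}(x+1)=\psi^{(k)}(x)+(-1)^{k}\frac{k!}{x^{k+1}}$; since $\psi^{(k)}$ has sign $(-1)^{k+1}$, taking absolute values gives $\bigl|\psi^{(k)}(x)\bigr|=\bigl|\psi^{(k)}(x+1)\bigr|+\frac{k!}{x^{k+1}}$. Subtracting $\frac{k!}{x^{k+1}}$ from all three members of~\eqref{poly-alpha-beta-best-ineq} shows that the theorem is equivalent to Corollary~\ref{sharp-ineq-polygamma-cor-1}, namely to
\[
\frac{(k-1)!}{(x+L)^k}<\bigl|\psi^{(k)}(x+1)\bigr|<\frac{(k-1)!}{\bigl(x+\frac12\bigr)^k},\qquad L:=\Bigl[\tfrac{(k-1)!}{|\psi^{(k)}(1)|}\Bigr]^{1/k}.
\]
The device is to introduce, for $x>0$, the auxiliary function
\[
\theta_k(x)=\Bigl[\frac{(k-1)!}{\bigl|\psi^{(k)}(x+1)\bigr|}\Bigr]^{1/k}-x,
\]
so that $\bigl|\psi^{(k)}(x+1)\bigr|=\frac{(k-1)!}{(x+\theta_k(x))^k}$. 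Because $\frac{(k-1)!}{(x+c)^k}$ is decreasing in $c$, the lower bound above holds exactly when $\theta_k(x)<L$ and the upper bound exactly when $\theta_k(x)>\frac12$. Hence the whole theorem collapses to proving $\frac12<\theta_k(x)<L$ on $(0,\infty)$ together with sharpness, and all of this follows at once if I show that $\theta_k$ is strictly decreasing with $\theta_k(0^{+})=L$ and $\theta_k(\infty)=\frac12$.

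Next I would differentiate. Using $\frac{d}{dx}\bigl|\psi^{(k)}(x+1)\bigr|=-\bigl|\psi^{(k+1)}(x+1)\bigr|$ (again read off from the sign pattern), a routine computation gives
\[
\theta_k'(x)=\frac{[(k-1)!]^{1/k}}{k}\,\frac{\bigl|\psi^{(k+1)}(x+1)\bigr|}{\bigl|\psi^{(k)}(x+1)\bigr|^{(k+1)/k}}-1.
\]
Since all quantities are positive, raising to the appropriate powers shows that $\theta_k'(x)<0$ is equivalent to
\[
\bigl|\psi^{(k+1)}(x+1)\bigr|<\frac{k}{[(k-1)!]^{1/k}}\,\bigl|\psi^{(k)}(x+1)\bigr|^{(k+1)/k},
\]
which is precisely~\eqref{gen-1-1} evaluated at the point $x+1$ with $n=k$. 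This equivalence is the crux of the argument, and it is exactly the already-recorded inequality~\eqref{gen-1-1}; so the monotonicity of $\theta_k$ is obtained for free the moment~\eqref{gen-1-1} is invoked. (Should a self-contained treatment be wanted, I would prove~\eqref{gen-1-1} from the integral representation $\bigl|\psi^{(k)}(x)\bigr|=\int_0^{\infty}\frac{t^{k}}{1-e^{-t}}e^{-xt}\,\td t$ by a H\"older/moment-inequality estimate.) I expect this step, which I can now cite rather than reprove, to be the only genuine obstacle; everything else is bookkeeping.

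Finally I would compute the two endpoint values. Continuity gives $\theta_k(0^{+})=\bigl[(k-1)!/|\psi^{(k)}(1)|\bigr]^{1/k}=L$. For $x\to\infty$ I would insert the asymptotic expansion $\bigl|\psi^{(k)}(y)\bigr|=\frac{(k-1)!}{y^{k}}+\frac{k!}{2y^{k+1}}+O(y^{-k-2})$ with $y=x+1$, giving $\bigl[(k-1)!/|\psi^{(k)}(y)|\bigr]^{1/k}=y-\frac12+O(y^{-1})=x+\frac12+O(x^{-1})$, whence $\theta_k(\infty)=\frac12$. Being strictly decreasing with these endpoints, $\theta_k$ satisfies $\frac12<\theta_k(x)<L$ for every $x>0$, which is~\eqref{poly-alpha-beta-best-ineq-c-1} and therefore~\eqref{poly-alpha-beta-best-ineq}. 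Sharpness then drops out of the same limits: any constant larger than $\frac12$ would be exceeded by $\theta_k(x)$ only up to some point and would fail the upper bound for large $x$ (as $\theta_k(x)\downarrow\frac12$), while any constant smaller than $L$ would fail the lower bound for small $x$ (as $\theta_k(x)\uparrow L$); hence $\frac12$ and $L$ are best possible. The corollaries follow by the same shift-and-recurrence manipulation applied $m$ times.
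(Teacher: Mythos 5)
Your proposal is correct and follows essentially the same route as the paper: your auxiliary function $\theta_k$ is exactly the paper's $h_k$ (since the recurrence gives $\bigl|\psi^{(k)}(x)\bigr|-\frac{k!}{x^{k+1}}=\bigl|\psi^{(k)}(x+1)\bigr|$), its strict decrease is obtained from the same key inequality~\eqref{gen-1-1}, and the endpoint limits at $0^+$ and $\infty$ are computed from the same asymptotic information used in the paper. The corollary itself is then dispatched exactly as in the paper, by telescoping the recurrence~\eqref{psisymp4} $m$ times to get $\bigl|\psi^{(k)}(x+m)\bigr|=\bigl|\psi^{(k)}(x)\bigr|-k!\sum_{i=1}^{m}\frac{1}{(x+i-1)^{k+1}}$ and substituting into the bounds of Theorem~\ref{sharp-ineq-polygamma-thm}.
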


\begin{rem}
When approximating the psi function $\psi(x)$ and polygamma functions $\psi^{(k)}(x)$ for $k\in\mathbb{N}$, the double inequalities~\eqref{corollary2.3-rew} and~\eqref{poly-alpha-beta-best-ineq} are more accurate than~\eqref{qi-psi-ineq-1} and~\eqref{qi-psi-ineq} as long as $x$ is enough close to $0$ from the right-hand side. For example, the right-hand side inequality in~\eqref{qi-psi-ineq-beta-2} and~\eqref{poly-alpha-beta-best-ineq} has been applied in the proof of~\cite[Lemma~3]{Open-TJM-2003.tex} to prove that the inequality
\begin{equation}\label{gamma(t/(1+2t))}
\frac{1+2t}{2t^2}\biggl[\ln\Gamma\biggl(\frac{t}{1+2t}\biggr)-\ln\Gamma(t)\biggr]<1-\psi(t)
\end{equation}
is valid for $t>0$.
\end{rem}

\section{Proofs of Theorem~\ref{sharp-ineq-polygamma-thm} and corollaries}

Now we are in a position to prove Theorem~\ref{sharp-ineq-polygamma-thm} and the above corollaries.

\begin{proof}[Proof of Theorem~\ref{sharp-ineq-polygamma-thm}]
For $x>0$ and $k\in\mathbb{N}$, let
\begin{equation}\label{poly-alpha-beta-best-ineq-rew}
h_k(x)=\Biggl[\frac{(k-1)!}{\bigl\vert\psi^{(k)}(x)\bigr\vert-\frac{k!}{x^{k+1}}}\Biggr]^{1/k}-x.
\end{equation}
Using the right-hand side inequality in~\eqref{pai-alzer-thm9-ineq} for $n\ge0$ yields
\begin{align*}
h_k(x)&>\Biggl[\frac{(k-1)!}{S_k(2n+1;x)-\frac{k!}{x^{k+1}}}\Biggr]^{1/k}-x\\
&=\Biggl\{\frac{(k-1)!}{\frac{(k-1)!}{x^k}-\frac{k!}{2x^{k+1}} +\sum_{i=1}^{2n+1}B_{2i}\bigl[\prod_{j=1}^{k-1}(2i+j)\bigr] \frac1{x^{2i+k}}}\Biggr\}^{1/k}-x\\
&=x\Biggl\{\Biggl[\frac1{1-\frac{k}{2x} +\sum_{i=1}^{2n+1}\frac{B_{2i}}{(k-1)!}\bigl[\prod_{j=1}^{k-1}(2i+j)\bigr] \frac1{x^{2i}}}\Biggr]^{1/k}-1\Biggr\}\\
&=\frac1{u}\Biggl\{\Biggl[\frac1{1-\frac{k}{2}u +\sum_{i=1}^{2n+1}\frac{B_{2i}}{(k-1)!}\bigl[\prod_{j=1}^{k-1}(2i+j)\bigr] u^{2i}}\Biggr]^{1/k}-1\Biggr\},\\
&\to\frac12\quad \text{as $u\to0^+$, or say, $x\to\infty$}.
\end{align*}
Similarly, making use of the left-hand side inequality in~\eqref{pai-alzer-thm9-ineq} for $n\ge0$ results in
\begin{align*}
h_k(x)&<\Biggl[\frac{(k-1)!}{S_k(2n;x)-\frac{k!}{x^{k+1}}}\Biggr]^{1/k}-x\\*
&=\Biggl\{\frac{(k-1)!}{\frac{(k-1)!}{x^k}-\frac{k!}{2x^{k+1}} +\sum_{i=1}^{2n}B_{2i}\bigl[\prod_{j=1}^{k-1}(2i+j)\bigr] \frac1{x^{2i+k}}}\Biggr\}^{1/k}-x\\
&=x\Biggl\{\Biggl[\frac1{1-\frac{k}{2x} +\sum_{i=1}^{2n}\frac{B_{2i}}{(k-1)!}\bigl[\prod_{j=1}^{k-1}(2i+j)\bigr] \frac1{x^{2i}}}\Biggr]^{1/k}-1\Biggr\}\\
&=\frac1{u}\Biggl\{\Biggl[\frac1{1-\frac{k}{2}u +\sum_{i=1}^{2n}\frac{B_{2i}}{(k-1)!}\bigl[\prod_{j=1}^{k-1}(2i+j)\bigr] u^{2i}}\Biggr]^{1/k}-1\Biggr\},\\
&\to\frac12\quad \text{as $u\to0^+$, or say, $x\to\infty$}.
\end{align*}
In a word, it follows that
\begin{equation}\label{h_k(x)=frac12}
\lim_{x\to\infty}h_k(x)=\frac12.
\end{equation}
\par
By the well-known recurrence formula~\cite[p.~260, 6.4.6]{abram}
\begin{equation}\label{psisymp4}
\psi^{(n-1)}(x+1)=\psi^{(n-1)}(x)+\frac{(-1)^{n-1}(n-1)!}{x^n}
\end{equation}
and the integral representation~\cite[p.~260, 6.4.1]{abram}
\begin{equation}\label{psim}
\psi ^{(k)}(x)=(-1)^{k+1}\int_{0}^{\infty}\frac{t^{k}}{1-e^{-t}}e^{-xt}\td t
\end{equation}
for $x>0$ and $n\in\mathbb{N}$, we have
\begin{equation}
\lim_{x\to0^+}h_k(x)
=\lim_{x\to0^+}\biggl[\frac{(k-1)!}{\vert\psi^{(k)}(x+1)\vert}\biggr]^{1/k}
=\biggl[\frac{(k-1)!}{\vert\psi^{(k)}(1)\vert}\biggr]^{1/k}.\label{psi{(k)}(1)}
\end{equation}
\par
By virtue of~\eqref{psisymp4} and~\eqref{psim}, straightforward computation yields
\begin{align*}
h_k'(x)&=\frac{\td}{\td x}\Biggl\{\biggl[\frac{(k-1)!}{(-1)^{k+1}\psi^{(k)}(x+1)}\biggr]^{1/k} -x\Biggr\}\\
&=-\frac{\psi^{(k+1)}(x+1)}{k\psi^{(k)}(x+1)} \biggl[\frac{(k-1)!}{(-1)^{k+1}\psi^{(k)}(x+1)}\biggr]^{1/k}-1\\
&=\frac{\bigl\vert\psi^{(k+1)}(x+1)\bigr\vert}{k!} \Biggl[\frac{(k-1)!}{\bigl\vert\psi^{(k)}(x+1)\bigr\vert}\Biggr]^{1+1/k}-1\\
&=\Biggl[\sqrt[k+1]{\frac{\bigl\vert\psi^{(k+1)}(x+1)\bigr\vert}{k!}}\, \sqrt[k]{\frac{(k-1)!}{\bigl\vert\psi^{(k)}(x+1)\bigr\vert}}\,\Biggr]^{k+1}-1.
\end{align*}
By virtue of the inequality~\eqref{gen-1-1}, it follows that $h_k'(x)<0$ on $(0,\infty)$, which means that the functions $h_k(x)$ for $k\in\mathbb{N}$ are strictly decreasing on $(0,\infty)$.
\par
In conclusion, from a combination of the decreasing monotonicity of $h_k(x)$ with the limits~\eqref{h_k(x)=frac12} and~\eqref{psi{(k)}(1)}, Theorem~\ref{sharp-ineq-polygamma-thm} follows immediately.
\end{proof}

\begin{proof}[Proof of Corollary~\ref{sharp-ineq-polygamma-cor-1}]
This follows from
\begin{equation*}
\bigl\vert\psi^{(k)}(x)\bigr\vert-\frac{k!}{x^{k+1}}=\bigl\vert\psi^{(k)}(x+1)\bigr\vert,
\end{equation*}
an equivalence of~\eqref{psisymp4}, for $k\in\mathbb{N}$ and $x>0$.
\end{proof}

\begin{proof}[Proof of Corollary~\ref{sharp-ineq-polygamma-cor-2}]
Utilizing the identity~\eqref{psisymp4} and the integral express~\eqref{psim} shows
\begin{equation*}
\bigl\vert\psi^{(k)}(x+m)\bigr\vert=\bigl\vert\psi^{(k)}(x)\bigr\vert-\sum_{i=1}^m\frac{k!}{(x+i-1)^{k+1}}
\end{equation*}
for $k,m\in\mathbb{N}$ and $x>0$. Combining this with~\eqref{poly-alpha-beta-best-ineq} and the case $x+m$ of~\eqref{poly-alpha-beta-best-ineq} respectively leads to the inequalities~\eqref{poly-alpha-beta-best-ineq-c-3} and~\eqref{poly-alpha-beta-best-ineq-c-2}. Corollary~\ref{sharp-ineq-polygamma-cor-2} is proved.
\end{proof}

\end{document}